\documentclass[10pt,reqno]{amsart}%
\usepackage{mathtools}
\usepackage{mathptmx}
\usepackage{geometry}
\usepackage{enumitem}
\usepackage[expansion=false]{microtype}
\usepackage{xcolor}
\usepackage{hyperref}
\usepackage{amsmath}
\usepackage{amsfonts}
\usepackage{amssymb}
\usepackage{amsthm}
\hypersetup{
colorlinks=true,
linkcolor={blue!60!black},
citecolor={blue!60!black},
urlcolor={blue!80!black},
}

\theoremstyle{plain}
\newtheorem{theorem}{Theorem}[section]

\newtheorem{lemma}[theorem]{Lemma}
\newtheorem{corollary}[theorem]{Corollary}
\theoremstyle{definition}

\newtheorem{example}[theorem]{Example}
\newtheorem{remark}[theorem]{Remark}
\DeclareMathOperator{\Image}{Im}
\DeclareMathOperator{\Ker}{ker}
\begin{document}
\title[An integral formula for the inhomogeneous JvN equation]{An integral formula for the inhomogeneous Jordan--von Neumann equation}

\author{Alexandra Paicu}
\address{Department of Mathematics, Technical University of Cluj-Napoca,
         28 Memorandumului Street, 400114 Cluj-Napoca, Romania}
\email{alexandra.paicu@math.utcluj.ro}

\author{Dorian Popa}
\address{Department of Mathematics, Technical University of Cluj-Napoca,
         28 Memorandumului Street, 400114 Cluj-Napoca, Romania.\newline
         ORCID: \href{https://orcid.org/0000-0001-6197-3138}{0000-0001-6197-3138}}
\email{popa.dorian@math.utcluj.ro}

\author{Mircea Dan Rus}
\address{Department of Mathematics, Technical University of Cluj-Napoca,
         28 Memorandumului Street, 400114 Cluj-Napoca, Romania.\newline
         ORCID: \href{https://orcid.org/0000-0001-6255-0241}{0000-0001-6255-0241}}
\email{rus.mircea@math.utcluj.ro}

\date{\today}
\subjclass[2020]{Primary 39B22; Secondary 39B52}
\keywords{Quadratic functional equation, Jordan--von Neumann equation, inhomogeneous
functional equation, cocycle identity, integral representation}

\begin{abstract}
We study the inhomogeneous form of the Jordan--von Neumann quadratic
functional equation, in which the right-hand side is a prescribed function $g
$ of two real variables. We prove that the existence of a $C^{2}$ solution is
equivalent to $g$ being itself of class $C^{2}$ and satisfying a single
three-variable cocycle identity, and we exhibit the solution as a closed-form
integral expression involving the second partial derivative of $g $ along the
first coordinate axis. The construction preserves regularity along the
standard scale of $C^{k}$, smooth, and polynomial classes.

\end{abstract}
\maketitle

\section{Introduction}

\label{sec:intro}

The Jordan--von Neumann functional equation
\begin{equation}
\label{eq:JvN}f(x + y) + f(x - y) = 2 f(x) + 2 f(y),\qquad x, y\in\mathbb{R},
\end{equation}
is the algebraic skeleton of the parallelogram identity and, by the celebrated
theorem of Jordan and von Neumann~\cite{JordanVonNeumann1935}, characterizes
those norms on a real vector space that arise from an inner product. Its
solution space admits an explicit description: by
Kurepa~\cite{Kurepa1956,Kurepa1964}, every $f\colon\mathbb{R}\to\mathbb{R}$
satisfying~(\ref{eq:JvN}) is of the form $f(x) = B(x, x)$ for a unique
$\mathbb{Q}$-bilinear symmetric form $B\colon\mathbb{R}\times\mathbb{R}%
\to\mathbb{R}$, and under any of several mild regularity hypotheses (e.g.,
continuity at one point, Lebesgue measurability, or boundedness on a set of
positive measure), this collapses to $f(x) = c x^{2}$ for some $c\in
\mathbb{R}$. Comprehensive accounts of this theory are given in the monographs
of Acz\'el and Dhombres~\cite{AczelDhombres1989} and Kuczma~\cite{Kuczma2009}.

In the present paper we are concerned with the \emph{inhomogeneous} version
of~(\ref{eq:JvN}): for a prescribed function $g\colon\mathbb{R}^{2}%
\to\mathbb{R}$, we ask when there exists $f\colon\mathbb{R}\to\mathbb{R}$ such
that
\begin{equation}
\label{eq:inhom}f(x + y) + f(x - y) - 2 f(x) - 2 f(y) = g(x, y),\qquad(x,
y)\in\mathbb{R}^{2},
\end{equation}
and how $f$ is then determined by $g$. We denote by $\mathsf{Q}$ the linear
operator $f\mapsto\mathsf{Q}[f]$ defined by the left-hand side
of~(\ref{eq:inhom}), which now reads $\mathsf{Q}[f] = g$; the kernel
$\Ker\mathsf{Q}$ is exactly the solution space of~(\ref{eq:JvN}).

The question is natural from several perspectives. From the perspective of the
stability theory of functional equations initiated by Hyers and Ulam---and
developed for the quadratic equation by Skof~\cite{Skof1983},
Cholewa~\cite{Cholewa1984}, and Czerwik~\cite{Czerwik1992}; see
also~\cite{HyersIsacRassias1998}---the data $g$ represents a controlled
perturbation of~(\ref{eq:JvN}), and the recovery of $f$ from $g$ is the exact
inverse of the perturbation map. From an algebraic standpoint, the question
parallels the classical inverse problem for the linear Cauchy operator
$\mathsf{L}[f](x, y) := f(x + y) - f(x) - f(y)$. For~$\mathsf{L}$, it is a
standard fact that $g$ belongs to the image of $\mathsf{L}$ if and only if $g$
satisfies both the cocycle identity
\begin{equation}
\label{eq:Lcocycle}g(x, y) + g(x + y, z) = g(x, y + z) + g(y, z),\qquad x, y,
z\in\mathbb{R},
\end{equation}
and the symmetry condition $g(x, y) = g(y, x)$; identity~(\ref{eq:Lcocycle})
is the standard $2$-cocycle equation that arises in group cohomology
\textup{(}see, e.g., \cite{AczelDhombres1989}\textup{)} and is forced by the
associativity of addition. Erd\H{o}s~\cite{Erdos1959} showed that the
situation is sensitive to regularity: without any regularity hypothesis, the
symmetry condition does not follow from the cocycle identity, as he
established by exhibiting an asymmetric solution of~(\ref{eq:Lcocycle}) via a
Hamel-basis construction; under continuity of $g$, however, the cocycle
identity~(\ref{eq:Lcocycle}) implies the symmetry of $g$, and every continuous
$g$ satisfying~(\ref{eq:Lcocycle}) is then in the image of $\mathsf{L}$. For
$g\in C^{1}(\mathbb{R}^{2})$, Prunescu~\cite{Prunescu2007} gave a constructive
counterpart that exhibits an explicit solution.

We refer to~(\ref{eq:Lcocycle}) as the cocycle identity for $\mathsf{L}$. The
analog for the quadratic operator $\mathsf{Q}$ is the three-variable identity
\begin{equation}
\label{eq:cocycle}g(x + y, z) + g(x - y, z) + 2 g(x, y) = g(x, y + z) + g(x, y
- z) + 2 g(y, z),
\end{equation}
holding for all $x, y, z\in\mathbb{R}$ whenever $g\in\Image\mathsf{Q}$. The
derivation is parallel to that of~(\ref{eq:Lcocycle}) for $\mathsf{L}$ but
relies on the parallelogram structure of $\mathsf{Q}$ rather than on the
associativity of addition; see Section~\ref{sec:cocycle}. By analogy with the
linear case, we call identity~(\ref{eq:cocycle}) the cocycle identity for
$\mathsf{Q}$.

Our main result, stated below, asserts that, in the class $C^{2}$,
identity~(\ref{eq:cocycle}) is both necessary and sufficient for $g$ to lie in
the image of $\mathsf{Q}$, and that a solution is then given by a closed-form integral.

\begin{theorem}
\label{thm:main} Let $g\colon\mathbb{R}^{2}\to\mathbb{R}$.

\begin{enumerate}
[label=\textup{(\roman*)},leftmargin=*]

\item \label{it:nec} If $g = \mathsf{Q}[f]$ for some $f\colon\mathbb{R}%
\to\mathbb{R}$, then $g$ satisfies the cocycle identity~(\ref{eq:cocycle}).

\item \label{it:suf} Conversely, if $g\in C^{2}(\mathbb{R}^{2})$
satisfies~(\ref{eq:cocycle}), then the function $f\colon\mathbb{R}%
\to\mathbb{R}$ defined by
\begin{equation}
\label{eq:integral}f(x) = -\tfrac{1}{2}\, g(0, 0) - \tfrac{1}{2}\,
(\partial_{2} g)(0, 0)\, x + \tfrac{1}{2}\int_{0}^{x} (x - t)\, (\partial
_{2}^{2} g)(t, 0)\, \mathrm{d} t
\end{equation}
belongs to $C^{2}(\mathbb{R})$ and satisfies $\mathsf{Q}[f] = g$ on
$\mathbb{R}^{2}$.
\end{enumerate}

The solution is unique up to addition of an element of $\Ker\mathsf{Q}$.
\end{theorem}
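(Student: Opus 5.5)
The plan is to prove \ref{it:nec} by direct expansion, and to prove \ref{it:suf} by applying the cocycle identity to the defect $h := g - \mathsf{Q}[f]$ and showing $h\equiv 0$, rather than computing $\mathsf{Q}[f]$ from~(\ref{eq:integral}) head-on.

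For \ref{it:nec}, substitute $g=\mathsf{Q}[f]$ into both sides of~(\ref{eq:cocycle}) and expand. On the left, the terms $\mp 2f(x\pm y)$ coming from $g(x\pm y,z)$ cancel against those in $2g(x,y)$. What remains is
\[
\sum_{\pm,\pm} f(x\pm y\pm z) - 4f(x) - 4f(y) - 4f(z).
\]
On the right, the terms $-2f(y\pm z)$ cancel against $2g(y,z)$, and the same expression results. This is a routine bookkeeping check.

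For \ref{it:suf}, let $f$ be defined by~(\ref{eq:integral}).
\begin{enumerate}[label=(\alph*),leftmargin=*]
\item \emph{Regularity.} Since $(\partial_2^2 g)(\cdot,0)$ is continuous, differentiation under the integral gives $f'(x)=-\tfrac12(\partial_2 g)(0,0)+\tfrac12\int_0^x(\partial_2^2g)(t,0)\,\mathrm{d}t$ and $f''(x)=\tfrac12(\partial_2^2 g)(x,0)$. Hence $f\in C^2(\mathbb{R})$, so $\mathsf{Q}[f]\in C^2(\mathbb{R}^2)$ and also $h\in C^2(\mathbb{R}^2)$.
\item \emph{$h$ satisfies the cocycle identity.} By \ref{it:nec} and linearity, $h$ satisfies~(\ref{eq:cocycle}).
\item \emph{Normalization of $h$.} A direct computation from the formulas in (a) gives $\mathsf{Q}[f](0,0)=-2f(0)=g(0,0)$ and $(\partial_2\mathsf{Q}[f])(0,0)=-2f'(0)=(\partial_2 g)(0,0)$. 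It also gives $(\partial_2^2\mathsf{Q}[f])(t,0)=2f''(t)-2f''(0)=(\partial_2^2g)(t,0)-(\partial_2^2 g)(0,0)$. Consequently
\[
h(0,0)=0,\qquad (\partial_2h)(0,0)=0,\qquad (\partial_2^2h)(t,0)\equiv c:=(\partial_2^2g)(0,0).
\]
\item \emph{Differentiate the cocycle identity in $z$ at $z=0$.} Apply this to~(\ref{eq:cocycle}) for $h$.
  \begin{itemize}
  \item At order zero (i.e.\ $z=0$) we get $a(x+y)+a(x-y)=2a(y)$, where $a(x):=h(x,0)$.
  \item At order one the terms $\pm\partial_2h(x,y)$ cancel, and we get $b(x+y)+b(x-y)=2b(y)$, where $b(x):=(\partial_2h)(x,0)$.
  \item At order two we get $(\partial_2^2h)(x+y,0)+(\partial_2^2h)(x-y,0)=2(\partial_2^2h)(x,y)+2(\partial_2^2h)(y,0)$.
  \end{itemize}
\item \emph{Conclude $h\equiv 0$.} Insert the constancy from (c) into the order-two identity: $2c=2(\partial_2^2h)(x,y)+2c$, so $\partial_2^2 h\equiv0$. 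Hence $h(x,y)=a(x)+b(x)y$. Setting $y=0$ in the functional equations for $a$ and $b$ gives $a(x)=a(0)$ and $b(x)=b(0)$, so both are constant. By (c), $a(0)=h(0,0)=0$ and $b(0)=(\partial_2 h)(0,0)=0$, so $h\equiv0$, i.e.\ $\mathsf{Q}[f]=g$.
\end{enumerate}

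Uniqueness is immediate: if $\mathsf{Q}[f_1]=\mathsf{Q}[f_2]=g$, then $f_1-f_2\in\Ker\mathsf{Q}$ by linearity. The step needing the most care is (c), that is, computing the derivatives of $\mathsf{Q}[f]$ at $y=0$ and checking that the constants in~(\ref{eq:integral}) are exactly those making $h$, $\partial_2 h$ vanish at the origin and $\partial_2^2 h(\cdot,0)$ constant. Once that is done, the rest is formal.
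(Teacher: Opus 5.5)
Your proof is correct and follows essentially the paper's route. You differentiate the cocycle identity in $z$ at $z=0$ to orders zero, one and two, use the second-order identity to force $\partial_{2}^{2}(g-\mathsf{Q}[f])\equiv 0$, write the defect as $A(x)+yB(x)$, and kill $A$ and $B$ with the zeroth- and first-order identities. The difference is only organizational: you apply these identities to the defect $g-\mathsf{Q}[f]$ (legitimate by part~(i) and linearity) and check normalizations only at the origin and on the axis, whereas the paper applies them to $g$ itself (Lemma~\ref{lem:consequences}) and compares with the explicitly computed derivatives of $\mathsf{Q}[f]$.
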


Throughout, $\partial_{1}$ and $\partial_{2}$ denote partial differentiation
in the first and second variables, respectively. The argument for
part~\ref{it:suf} proceeds by differentiating the cocycle
identity~(\ref{eq:cocycle}) in $z$ at $z = 0$. One differentiation yields that
$(\partial_{2} g)(\cdot, 0)$ is constant on $\mathbb{R}$; two differentiations
produce an identity satisfied by $h(t) := (\partial_{2}^{2} g)(t, 0)$ that
algebraically reproduces $\mathsf{Q}[f]$ for any $f$ with $f^{\prime\prime}=
h/2$. Two integrations then give the formula~(\ref{eq:integral}). The
construction is the analog for $\mathsf{Q}$ of the integral formula obtained
by Prunescu~\cite{Prunescu2007} for $\mathsf{L}$ in the class $C^{1}$.

\newpage

\section{The cocycle identity and its consequences}

\label{sec:cocycle}

\subsection{Necessity}

\begin{proof}
[Proof of Theorem~\textnormal{\ref{thm:main}~\ref{it:nec}}]Set $g = \mathsf{Q}%
[f]$. We show that both sides of~(\ref{eq:cocycle}) expand to
\begin{multline}
\label{eq:Sdef}S(x, y, z) := f(x + y + z) + f(x + y - z) + f(x - y + z) + f(x
- y - z)\\
- 4 f(x) - 4 f(y) - 4 f(z).
\end{multline}
For the left-hand side, sum
\begin{align*}
g(x + y, z)  &  = f(x + y + z) + f(x + y - z) - 2 f(x + y) - 2 f(z),\\
g(x - y, z)  &  = f(x - y + z) + f(x - y - z) - 2 f(x - y) - 2 f(z),\\
2 g(x, y)  &  = 2 f(x + y) + 2 f(x - y) - 4 f(x) - 4 f(y);
\end{align*}
the $\pm2 f(x + y)$ and $\pm2 f(x - y)$ terms cancel, and the result is $S(x,
y, z)$. For the right-hand side, the analogous expansion of $g(x, y + z) +
g(x, y - z) + 2 g(y, z)$ produces the same expression $S(x, y, z)$.
\end{proof}

\subsection{Operatorial reading of the cocycle identity}

\label{subsec:origin}

The cocycle identities~(\ref{eq:Lcocycle}) and (\ref{eq:cocycle}) admit a
unifying operatorial description that places them in a general framework. Let
$T\colon\mathcal{F}(\mathbb{R}, \mathbb{R})\to\mathcal{F}(\mathbb{R}^{2},
\mathbb{R})$ be a linear operator measuring the defect of some property
compatible with the group structure of $(\mathbb{R}, +)$, in the sense that
$T[f]\equiv0$ characterizes that property (here $\mathcal{F}(\mathbb{R}^{n},
\mathbb{R})$ denotes the space of real-valued functions on $\mathbb{R}^{n}$).
The iterated application of $T$ along the two coordinate axes of $T[f]$
produces two three-variable defects: at $z$ fixed, the slice $t\mapsto T[f](t,
z)$ has its own defect $T\bigl[T[f](\cdot, z)\bigr](x, y)$; at $x$ fixed, the
slice $t\mapsto T[f](x, t)$ has the defect $T\bigl[T[f](x, \cdot)\bigr](y,
z)$. The \emph{iterated cocycle identity}
\begin{equation}
\label{eq:iterated-cocycle}T\bigl[\, T[f](\,\cdot\,, z)\,\bigr](x, y) \;=\;
T\bigl[\, T[f](x, \,\cdot\,)\,\bigr](y, z), \qquad x, y, z\in\mathbb{R},
\end{equation}
asserts that these two iterated defects coincide: the iterated defect of
$T[f]$ is axis-symmetric. This is the precise content of the cocycle condition
for any such $T$, and it specializes to the standard $2$-cocycle identity for
$T = \mathsf{L}$ as well as to~(\ref{eq:cocycle}) for $T = \mathsf{Q}$.

\subsection{Consequences of the cocycle identity}

\begin{lemma}
\label{lem:consequences}Suppose $g\colon\mathbb{R}^{2}\rightarrow\mathbb{R} $
satisfies the cocycle identity~(\ref{eq:cocycle}). Then:

\begin{enumerate}
[label=\textup{(\roman*)},leftmargin=*]

\item \label{it:axis} $g(x, 0) = g(0, 0)$ for every $x\in\mathbb{R}$;

\item \label{it:first} if $g\in C^{1}(\mathbb{R}^{2})$, then $(\partial_{2}
g)(x, 0) = (\partial_{2} g)(0, 0)$ for every $x\in\mathbb{R}$;

\item \label{it:second} if $g\in C^{2}(\mathbb{R}^{2})$, then, with $h(t) =
(\partial_{2}^{2} g)(t, 0)$,
\begin{equation}
\label{eq:secondorder}h(x + y) + h(x - y) - 2 h(y) = 2 (\partial_{2}^{2} g)(x,
y),\qquad(x, y)\in\mathbb{R}^{2}.
\end{equation}

\end{enumerate}
\end{lemma}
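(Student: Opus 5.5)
The plan is to specialize the cocycle identity~(\ref{eq:cocycle}) at $z=0$ and to differentiate it in $z$ at $z=0$. In~(\ref{eq:cocycle}) the variable $z$ enters only through the second argument of the first two terms on the left, through $y\pm z$ on the right, and through $g(y,z)$. For fixed $x,y$, both sides are therefore functions of the single variable $z$ of the same regularity as $g$, and they can be differentiated by the chain rule.

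For~\ref{it:axis}, I put $z=0$ in~(\ref{eq:cocycle}). The right-hand side becomes $2g(x,y)+2g(y,0)$, and the term $2g(x,y)$ cancels against the one on the left, leaving
\[
g(x+y,0)+g(x-y,0)=2g(y,0),\qquad x,y\in\mathbb{R}.
\]
Setting $y=0$ gives $2g(x,0)=2g(0,0)$, which is the claim. No regularity is needed here.

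For~\ref{it:first}, assuming $g\in C^{1}$, I differentiate~(\ref{eq:cocycle}) with respect to $z$ and evaluate at $z=0$. The left-hand side gives $(\partial_{2}g)(x+y,0)+(\partial_{2}g)(x-y,0)$. On the right, $\frac{\mathrm{d}}{\mathrm{d}z}g(x,y+z)$ and $\frac{\mathrm{d}}{\mathrm{d}z}g(x,y-z)$ at $z=0$ are $(\partial_{2}g)(x,y)$ and $-(\partial_{2}g)(x,y)$, so they cancel, and the last term contributes $2(\partial_{2}g)(y,0)$. Thus
\[
(\partial_{2}g)(x+y,0)+(\partial_{2}g)(x-y,0)=2(\partial_{2}g)(y,0),
\]
and setting $y=0$ yields $(\partial_{2}g)(x,0)=(\partial_{2}g)(0,0)$.

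For~\ref{it:second}, assuming $g\in C^{2}$, I differentiate~(\ref{eq:cocycle}) twice in $z$ and evaluate at $z=0$. The left-hand side gives $h(x+y)+h(x-y)$. On the right, the two terms $g(x,y\pm z)$ each contribute $(\partial_{2}^{2}g)(x,y)$, since the two sign changes in the chain rule cancel for $y-z$. The last term contributes $2h(y)$. Rearranging gives exactly~(\ref{eq:secondorder}).

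The only point requiring care is the sign bookkeeping for the argument $y-z$: it produces the cancellation at first order and the doubling at second order. Otherwise the argument is direct, and I do not expect a genuine obstacle.
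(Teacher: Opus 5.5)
Your proof is correct and follows the paper's argument: you specialize the cocycle identity at $z=0$ and differentiate once and twice in $z$ at $z=0$, with the same cancellation and doubling of the $g(x,y\pm z)$ terms. The only cosmetic difference is in part (i), where you set $z=0$ and then $y=0$ in two steps while the paper sets $y=z=0$ at once.
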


\begin{proof}
\ref{it:axis} Set $y = 0$ and $z = 0$ in~(\ref{eq:cocycle}) and obtain $4 g(x,
0) = 2 g(x, 0) + 2 g(0, 0)$, so $g(x, 0) = g(0, 0)$.\smallskip

\ref{it:first} Differentiate~(\ref{eq:cocycle}) in $z$ at $z = 0$. By the
chain rule, the terms $g(x, y \pm z)$ on the right-hand side contribute
$\pm(\partial_{2} g)(x, y)$ and cancel, while $g(x \pm y, z)$ and $g(y, z)$
contribute, respectively, $(\partial_{2} g)(x \pm y, 0)$ and $(\partial_{2}
g)(y, 0)$. The resulting identity reads
\begin{equation}
\label{eq:axial-first}(\partial_{2} g)(x + y, 0) + (\partial_{2} g)(x - y, 0)
= 2 (\partial_{2} g)(y, 0).
\end{equation}
Setting $y = 0$ in~(\ref{eq:axial-first}) gives the claim.\smallskip

\ref{it:second} Differentiate~(\ref{eq:cocycle}) twice in $z$ at $z = 0$. By
the chain rule, each term $g(x, y \pm z)$ contributes $(\partial_{2}^{2} g)(x,
y) $ on the right-hand side; the remaining terms produce $h(x \pm y)$ on the
left and $2 h(y)$ on the right.
\end{proof}

\section{The integral formula}

\label{sec:formula}

\begin{proof}
[Proof of Theorem~\textnormal{\ref{thm:main}~\ref{it:suf}}]Let $h(t)=(\partial
_{2}^{2}g)(t,0)$ and $f$ be defined by~(\ref{eq:integral}), i.e.,%
\[
f(x)=-\tfrac{1}{2}\,g(0,0)-\tfrac{1}{2}\,(\partial_{2}g)(0,0)\,x+\tfrac{1}%
{2}\int_{0}^{x}(x-t)\,h(t)\,\mathrm{d}t\text{.}%
\]
Since $h\in C(\mathbb{R})$, the function $f$ is well-defined and belongs to
$C^{2}(\mathbb{R})$, with
\begin{equation}
f(0)=-\tfrac{1}{2}\,g(0,0),\qquad f^{\prime}(0)=-\tfrac{1}{2}\,(\partial
_{2}g)(0,0),\qquad f^{\prime\prime}(x)=\tfrac{1}{2}\,h(x). \label{eq:fderivs}%
\end{equation}
Set $E(x,y)=\mathsf{Q}[f](x,y)-g(x,y)$. By linearity of $\partial_{2}^{2}$ and
the definition of $\mathsf{Q}$,
\begin{align*}
(\partial_{2}^{2}\mathsf{Q}[f])(x,y)  &  =f^{\prime\prime}(x+y)+f^{\prime
\prime}(x-y)-2f^{\prime\prime}(y)\\
&  =\tfrac{1}{2}\bigl[h(x+y)+h(x-y)-2h(y)\bigr],
\end{align*}
which equals $(\partial_{2}^{2}g)(x,y)$ by Lemma~\ref{lem:consequences}%
~\ref{it:second}. Hence, $\partial_{2}^{2}E\equiv0$, so
\[
E(x,y)=A(x)+y\,B(x)
\]
for some $A,B\colon\mathbb{R}\rightarrow\mathbb{R}$. By
Lemma~\ref{lem:consequences}~\ref{it:axis}, $g(x,0)=g(0,0)$, and
by~(\ref{eq:fderivs}), $\mathsf{Q}[f](x,0)=-2f(0)=g(0,0)$. Hence,
$A(x)=E(x,0)=0$. Differentiating $E(x,y)=yB(x)$ in $y$ gives $B(x)=(\partial
_{2}E)(x,0)=(\partial_{2}\mathsf{Q}[f])(x,0)-(\partial_{2}g)(x,0)$. We
compute
\[
(\partial_{2}\mathsf{Q}[f])(x,0)=f^{\prime}(x)-f^{\prime}(x)-2f^{\prime
}(0)=-2f^{\prime}(0)=(\partial_{2}g)(0,0)
\]
by~(\ref{eq:fderivs}), and $(\partial_{2}g)(x,0)=(\partial_{2}g)(0,0)$ by
Lemma~\ref{lem:consequences}~\ref{it:first}. Hence, $B\equiv0$, $E\equiv0$,
and $\mathsf{Q}[f]=g$.

Uniqueness modulo $\Ker\mathsf{Q}$ is the standard consequence of the
linearity of $\mathsf{Q}$.
\end{proof}

\begin{remark}
\label{rem:forms} The representation~(\ref{eq:integral}) can also be written
as an iterated double integral:
\begin{equation}
\label{eq:iterated}f(x) = -\tfrac{1}{2}\, g(0, 0) - \tfrac{1}{2}\,
(\partial_{2} g)(0, 0)\, x + \tfrac{1}{2}\int_{0}^{x}\!\!\int_{0}^{v}\!\!
(\partial_{2}^{2} g)(t, 0)\, \mathrm{d} t\, \mathrm{d} v;
\end{equation}
the equivalence with~(\ref{eq:integral}) follows by integration by parts or by
Fubini's theorem.
\end{remark}

\begin{corollary}
\label{cor:regularity} Suppose $g$ satisfies the cocycle
identity~(\ref{eq:cocycle}) and belongs to one of the regularity classes
$C^{k}(\mathbb{R}^{2}) $ \textup{(}with $k\ge2$\textup{)}, $C^{\infty
}(\mathbb{R}^{2})$, or the class of real polynomials in two variables. Then
the function $f$ defined by~(\ref{eq:integral}) belongs to the corresponding
class $C^{k}(\mathbb{R})$, $C^{\infty}(\mathbb{R})$, or the class of real
polynomials in one variable, respectively.
\end{corollary}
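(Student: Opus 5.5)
The plan is to read off the regularity of $f$ from the relation $f''=\tfrac12 h$ in~(\ref{eq:fderivs}), where $h(t)=(\partial_2^2 g)(t,0)$, and from the explicit form of~(\ref{eq:integral}). Everything reduces to how regular $h$ is, since $f$ is obtained from $h$ by two integrations plus an affine term.

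\emph{The case $C^k$, $k\ge2$.} If $g\in C^k(\mathbb{R}^2)$, then $\partial_2^2 g\in C^{k-2}(\mathbb{R}^2)$. Its restriction to the line $\{y=0\}$ is the composition with the smooth map $t\mapsto(t,0)$, so $h\in C^{k-2}(\mathbb{R})$. Write $H(x)=\int_0^x(x-t)h(t)\,\mathrm{d}t$. The proof of Theorem~\ref{thm:main}~\ref{it:suf} already shows that $H\in C^2$ with $H''=h$; equivalently, use the iterated form~(\ref{eq:iterated}) and apply the fundamental theorem of calculus twice. Hence $H^{(j+2)}=h^{(j)}$ exists and is continuous for $j\le k-2$, so $H\in C^k(\mathbb{R})$. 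Adding the affine part $-\tfrac12 g(0,0)-\tfrac12(\partial_2 g)(0,0)\,x$ does not affect this, and therefore $f\in C^k(\mathbb{R})$.

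\emph{The case $C^\infty$.} This follows by applying the previous case for every $k\ge2$.

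\emph{The polynomial case.} If $g$ is a real polynomial in two variables, then $\partial_2^2 g$ is a polynomial, and $h(t)=(\partial_2^2 g)(t,0)$ is a polynomial in $t$. The integral $\int_0^x(x-t)h(t)\,\mathrm{d}t$ is then a polynomial in $x$: expand $h(t)=\sum_j a_j t^j$ and integrate term by term, which gives $\sum_j a_j x^{j+2}/((j+1)(j+2))$. Since the remaining terms of~(\ref{eq:integral}) are affine in $x$, $f$ is a polynomial.

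There is no real obstacle here. The only point needing care is that restricting to the axis and integrating preserve each class, and this follows directly from the chain rule and the fundamental theorem of calculus. The subtlety worth stating explicitly is that $C^k$ regularity of $g$ gives only $C^{k-2}$ regularity of $h$, and it is exactly the two integrations that restore the two lost derivatives.
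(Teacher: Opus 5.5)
Your proof is correct and follows the paper's argument: the regularity of $h=(\partial_2^2 g)(\cdot,0)$ passes to $f$ through $f''=\tfrac12 h$ and the two integrations, and the polynomial case is handled by integrating term by term. Your explicit bookkeeping, namely $h\in C^{k-2}(\mathbb{R})$ with the two integrations restoring $C^k$, is more precise than the paper's wording, which only says that $h$ lies in the ``corresponding'' one-variable class.
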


\begin{proof}
The function $h = (\partial_{2}^{2} g)(\cdot, 0)$ lies in the corresponding
one-variable regularity class, hence so does $f^{\prime\prime}= h/2$, and
hence so does $f$ by the iterated-integral form~(\ref{eq:iterated}). If $g$ is
a real polynomial in two variables of total degree $d$, then $h$ is a real
polynomial in one variable of degree at most $d$, and $f$ is a real polynomial
in one variable of degree at most $d + 2$.
\end{proof}

\section{Examples and further directions}

We illustrate and verify Theorem~\ref{thm:main} with some examples of
functions $g$ that satisfy the cocycle identity~(\ref{eq:cocycle}) and the
corresponding solutions.

\label{sec:examples}

\begin{example}
\label{ex:quartic} Take $g(x,y)=x^{2}y^{2}$, $(x,y)\in\mathbb{R}^{2}$, which
satisfies~(\ref{eq:cocycle}) (both sides equal
 $2(x^{2}y^{2}+x^{2}z^{2}+y^{2}z^{2})$), hence Theorem~\ref{thm:main} applies. With $g(0,0)=0$, $(\partial
_{2}g)(0,0)=0$, $h(t)=(\partial_{2}^{2}g)(t,0)=2\,t^{2}$,
(\ref{eq:integral}) yields
\[
f(x)=\tfrac{1}{2}\int_{0}^{x}(x-t)\cdot2\,t^{2}\,\mathrm{d}t=\tfrac{x^{4}}%
{3}-\tfrac{x^{4}}{4}=\tfrac{x^{4}}{12},
\]
so $g=\mathsf{Q}\left[\tfrac{x^{4}}{12}\right]$.
\end{example}

\begin{example}
\label{ex:exp} Take $g(x,y)=2\,\mathrm{e}^{x}\cosh y-2\,\mathrm{e}%
^{x}-2\,\mathrm{e}^{y}$, $(x,y)\in\mathbb{R}^{2}$, which
satisfies~(\ref{eq:cocycle}) since $g=\mathsf{Q}[\mathrm{e}^{x}]$. Then
$g(0,0)=-2$, $(\partial_{2}g)(0,0)=-2$, $h(t)=2\,\mathrm{e}^{t}-2$, and
~(\ref{eq:integral}) yields
\begin{align*}
f(x) &  =1+x+\tfrac{1}{2}\int_{0}^{x}(x-t)\,(2\,\mathrm{e}^{t}-2)\,\mathrm{d}%
t\\
&  =1+x+(\mathrm{e}^{x}-x-1)-\tfrac{x^{2}}{2}=\mathrm{e}^{x}-\tfrac{x^{2}}{2},
\end{align*}
which is $\mathrm{e}^{x}$ modulo the kernel element $-\tfrac{1}{2}x^{2}%
\in\Ker\mathsf{Q}$.
\end{example}

\begin{example}
\label{ex:cos} Take $g(x,y)=2\cos x\cos y-2\cos x-2\cos y$, $(x,y)\in
\mathbb{R}^{2}$, which satisfies~(\ref{eq:cocycle}) since $g=\mathsf{Q}[\cos
x]$. Then $g(0,0)=-2$, $(\partial_{2}g)(0,0)=0$, $h(t)=2-2\cos t$, and
~(\ref{eq:integral}) yields
\[
f(x)=1+\tfrac{1}{2}\int_{0}^{x}(x-t)\,(2-2\cos t)\,\mathrm{d}t=1+\tfrac{x^{2}%
}{2}+(\cos x-1)=\cos x+\tfrac{x^{2}}{2},
\]
which is $\cos x$ modulo the kernel element $\tfrac{1}{2}x^{2}\in
\Ker\mathsf{Q}$.
\end{example}

\subsection*{Further directions}

\emph{The purely algebraic problem.} Whether the cocycle
identity~(\ref{eq:cocycle}) alone, with no regularity hypothesis on $g$, is
sufficient to ensure $g\in\Image\mathsf{Q}$, is open. The corresponding
question for the linear operator $\mathsf{L}$ has a definitive answer:
Erd\H{o}s~\cite{Erdos1959} showed that the linear cocycle
identity~(\ref{eq:Lcocycle}) must be supplemented by a symmetry condition
$g(x, y) = g(y, x)$, without which Hamel-basis constructions yield solutions
of~(\ref{eq:Lcocycle}) that are not in the image of~$\mathsf{L}$. A parallel
obstruction may exist for $\mathsf{Q}$ and deserves separate study.

\emph{The continuous case.} For the linear operator $\mathsf{L}$,
Erd\H{o}s~\cite{Erdos1959} proved that every continuous solution of the
cocycle identity~(\ref{eq:Lcocycle}) is automatically symmetric, hence lies in
the image of $\mathsf{L}$. Whether the cocycle identity~(\ref{eq:cocycle}) for
$\mathsf{Q}$ admits an analogous conclusion under mere continuity of
$g$---without the $C^{2}$ hypothesis used here---is an attractive open question.

\emph{The $C^{1}$ case.} The formula~(\ref{eq:integral}) requires $g\in
C^{2}(\mathbb{R}^{2})$ in order to define $h = (\partial_{2}^{2} g)(\cdot,
0)$. For $g\in C^{1}(\mathbb{R}^{2})$ satisfying~(\ref{eq:cocycle}), only one
differentiation of the cocycle is available, and a closed-form solution would
have to rely on a different combination of $g$, $\partial_{1} g$ and
$\partial_{2} g$. This would parallel the $C^{1}$ formula obtained by
Prunescu~\cite{Prunescu2007} for $\mathsf{L}$ and seems worth pursuing.

\bibliographystyle{spmpsci}
\bibliography{integral-formula-quadratic}

\end{document}